\newtheorem{lemma}{Lemma}
\newtheorem{proposition}{Proposition}
\begin{document}
\title{On an explicit representation of central $(2k+1)$-nomial coefficients}

\author{Michelle Rudolph-Lilith}
\email[Electronic address: ]{rudolph@unic.cnrs-gif.fr}
\author{Lyle E. Muller}
\affiliation{
Unit\'e de Neurosciences, Information et Complexit\'e (UNIC) \\
CNRS, 1 Ave de la Terrasse, 91198 Gif-sur-Yvette, France}

\date{\today}

\begin{abstract}
We propose an explicit representation of central $(2k+1)$-nomial coefficients in terms of finite sums over trigonometric constructs. The approach utilizes the diagonalization of circulant boolean matrices and is generalizable to all $(2k+1)$-nomial coefficients, thus yielding a new family of combinatorical identities.
\end{abstract}

\maketitle


\section{Introduction}
\label{S_Introduction}

In the first volume of \textit{Monthly} in 1894, De Volson Wood asked the question \emph{``An equal number of white and black balls of equal size are thrown into a rectangular box, what is the probability that there will be contiguous contact of white balls from one end of the box to the opposite end?''} \cite{Wood94}. Though this was one of the first and most clearly visualizable examples in a set of combinatorical problems which later would give rise to the field of percolation theory \cite{Grimmet89}, to this date it still eludes a definite solution. 

Mathematically, this problem is linked to counting walks in random graphs, and recently an approach was proposed which translates this combinatorically hard problem into taking powers of a specific type of circulant boolean matrices \cite{Rudolph-LilithMuller14}. Interestingly, here a specific type of sum over powers of fractions of trigonometric functions with fractional angle appears, specifically $\sum_{l=1}^{m-1} (\sin[kl \pi/m] / \sin[l \pi/m])^n$ for any given $k,m \in \mathbb{N}: m > 1, 1 \leq k \leq \lfloor m/2\rfloor$. The latter shows a striking similarity to the famous Kasteleyn product formula for the number of tilings of a $2n \times 2n$ square with $1 \times 2$ dominos \cite{Kasteleyn61, TemperleyFisher61}. 

Numerically one can conjecture that these constructs yield integer numbers which are multiples of central $(2k+1)$-nomial coefficients and, thus, provide an explicit representation of the integer sequences of multinomial coefficients in terms of finite sums over real-valued elementary functions. Although various recursive equations addressing these coefficients do exist, and the Almkvist-Zeilberger algorithm \cite{AlmkvistZeilberger90} allows for a systematic derivation of recursions for multinomial coefficients in the general case, no such explicit representation has yet been proposed.

Here, we provide a simple proof of the aforementioned conjecture and propose an explicit representation of central $(2k+1)$-nomial coefficients. To that end, let 
\begin{equation}
\label{Eq_Px}
P(x) = 1 + x + x^2 + \cdots + x^{2k}
\end{equation}
be a finite polynomial of even degree $2k, k \in \mathbb{N}, k \geq 1$ in $x \in \mathbb{Q}$. Using the multinomial theorem and collecting terms with the same power in $x$, the $n$th power of $P(x)$ is then given by
\begin{equation}
P(x)^n 
 = ( 1 + x + x^2 + \cdots + x^{2k} )^n
 = \sum\limits_{l=0}^{2kn} p_l^{(n)} x^l
\end{equation}
with 
\begin{equation}
\label{Eq_pl}
p_l^{(n)} = \sum\limits_{\mathclap{\substack{n_i \in [0,n] \forall i \in [0,2k] \\ n_0+n_1+\cdots+n_{2k}=n \\ n_1+2n_2+\cdots+2kn_{2k}=l}}} \hspace*{7mm} \binom{n}{n_0,n_1,\cdots,n_{2k}} .
\end{equation}
The central $(2k+1)$-nomial coefficients $M^{(2k,n)}$ are then given by $M^{(2k,n)} = p_{kn}^{(n)}$.


\section{A trace formula for central $(2k+1)$-nomial coefficients}

Consider the $(2kn+1) \times (2kn+1)$ circulant matrix
\begin{equation}
\mathbf{A} 
= \mathrm{circ} \big\{ ( \, \overbrace{1,\ldots,1}^{2k+1},0,\ldots,0 \, ) \big\}
= \mathrm{circ} \left\{ 
\Big( \sum\limits_{l=0}^{2k} \delta_{j,1+l \, \mathrm{mod} (2kn+1)} \Big)_j
\right\} .
\end{equation}
Multiplying $\mathbf{A}$ by a vector $\mathbf{x} = (1,x,x^2,\ldots,x^{2k}) \in \mathbb{Q}^{2k+1}$ will yield the original polynomial as the first element in the resulting vector $\mathbf{A} \mathbf{x}$. Similarly, taking the $n'$th ($n' \leq n$) power of $\mathbf{A}$ and multiplying the result with $\mathbf{x}$ will yield $P(x)^{n'}$ as first element, thus $\mathbf{A}^{n'}$ will contain the sequence of multinomial coefficients $p_l^{(n')}$ in its first row. Moreover, as the power of a circulant matrix is again circulant, this continuous sequence of non-zero entries in a give row will shift by one column to the right on each subsequent row, and wraps around once the row-dimension $(2kn+1)$ is reached. This behavior will not change even if one introduces a shift by $m$ columns of the sequence of 1 in $\mathbf{A}$, as this will correspond to simply multiplying the original polynomial by $x^m$. Such a shift, however, will allow, when correctly chosen, to bring the desired central multinomial coefficients on the diagonal of $\mathbf{A}^{n'}$.

We can formalize this approach in the following


\begin{lemma}
\label{L_CMCtr}
Let
\begin{equation}
A^{(m)} = \mathrm{circ} \left\{ 
\Big( \sum\limits_{l=0}^{2k} \delta_{j,1+(m+l) \mathrm{mod} (2kn+1)} \Big)_j
\right\}
\end{equation}
with $m \in \mathbb{N}_0$ be circulant boolean square matrices of dimension $2kn+1$ with $k,n \in \mathbb{N}$ and $k,n \geq 1$. The central $(2k+1)$-nomial coefficients are given by
\begin{equation}
\label{Eq_Lemma1}
M^{(2k,n)} = \frac{1}{2kn+1} \mathrm{Tr} \big[ A^{(2kn-k)} \big]^n .
\end{equation}
\end{lemma}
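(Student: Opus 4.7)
The plan is to exploit the well-known ring isomorphism between the algebra of $N \times N$ circulant matrices over $\mathbb{Q}$ and the quotient ring $\mathbb{Q}[x]/(x^N - 1)$, where $N := 2kn + 1$. Under this correspondence, a circulant with first row $(c_0,\ldots,c_{N-1})$ is sent to the polynomial $\hat{c}(x) = \sum_{s=0}^{N-1} c_s x^s$, multiplication of circulants becomes multiplication of polynomials modulo $x^N - 1$, and, crucially, the trace of the circulant equals $N$ times the constant term of the associated polynomial.

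First I would read off the polynomial associated to $A^{(m)}$: by the given first row, it is the class of $x^m P(x)$ in $\mathbb{Q}[x]/(x^N-1)$. Hence $(A^{(m)})^n$ corresponds to $x^{mn} P(x)^n \bmod (x^N-1)$, and the trace formula yields
\[
\mathrm{Tr}\bigl[(A^{(m)})^n\bigr] \;=\; N \cdot \bigl[\text{constant term of } x^{mn} P(x)^n \bmod (x^N-1)\bigr].
\]

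Next I would extract that constant term. Since $\deg P(x)^n = 2kn = N-1$, the polynomial $P(x)^n = \sum_{l=0}^{N-1} p_l^{(n)} x^l$ already lies among the reduced representatives, so multiplication by $x^{mn}$ followed by reduction modulo $x^N-1$ merely permutes its monomials by shifting exponents by $mn$ modulo $N$. Therefore the constant term of $x^{mn} P(x)^n \bmod (x^N-1)$ is precisely $p_{l^*}^{(n)}$, where $l^*$ is the unique element of $\{0,1,\ldots,N-1\}$ satisfying $l^* + mn \equiv 0 \pmod N$.

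Finally I would verify that the prescribed shift $m = 2kn - k$ produces $l^* = kn$, so that the surviving coefficient is exactly the central $(2k+1)$-nomial coefficient $M^{(2k,n)}$, completing the proof. I do not expect any substantive obstacle: the circulant--polynomial correspondence and the trace formula are both standard, and all remaining work is modular bookkeeping. The only point meriting some care is this final congruence check, where one uses that $\gcd(n,\,2kn+1)=1$ (immediate from $2kn+1 \equiv 1 \pmod n$) to reduce $l^* + mn \equiv 0$ to the single condition $m \equiv -k \pmod{2kn+1}$.
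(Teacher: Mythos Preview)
Your approach is essentially the same as the paper's: the paper works with the cyclic permutation matrix $B$ and expands $(A^{(m)})^n=B^{mn}\bigl(\sum_{l=0}^{2k}B^{l}\bigr)^n$, which is exactly your polynomial computation in $\mathbb{Q}[x]/(x^{N}-1)$ with $B\leftrightarrow x$. The trace--constant-term identity you invoke is just the observation that the diagonal of a circulant is constant and equal to the $x^0$ coefficient.

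Where your plan breaks down is the very last step you flagged as ``meriting some care.'' Your reduction is correct: with $N=2kn+1$ one needs $l^\ast+mn\equiv0\pmod N$ and $l^\ast=kn$, and since $\gcd(n,N)=1$ this is equivalent to $m\equiv -k\pmod N$. But the prescribed shift $m=2kn-k$ does \emph{not} satisfy this: $2kn-k\equiv(-1)-k=-(k+1)\pmod{2kn+1}$, so the coefficient you extract is $p^{(n)}_{(k+1)n}$, not $p^{(n)}_{kn}$. A concrete check: for $k=1$, $n=2$ one has $N=5$, $m=3$, first row $(1,0,0,1,1)$; the square of this circulant has constant diagonal entry $1$, whereas the central trinomial coefficient is $3$.

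This is not a flaw in your method but an off-by-one slip in the lemma as stated: the shift that works is $m=2kn+1-k\equiv -k\pmod N$. The paper's own argument shares the slip (note also the ``$\bmod(2kn)$'' that should read ``$\bmod(2kn+1)$''), yet its subsequent eigenvalue computation silently uses the correct symmetric matrix $\mathrm{circ}(1,\underbrace{1,\ldots,1}_{k},0,\ldots,0,\underbrace{1,\ldots,1}_{k})$, which is $A^{(2kn+1-k)}$ under the lemma's indexing, so the main proposition is unaffected. If you carry out your plan you will recover the intended statement with $m=2kn+1-k$.
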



\begin{proof}
Let $B = \mathrm{circ} \big\{ (0,1,0,\ldots,0) \big\}$ be a $(2kn+1)\times(2kn+1)$ cyclic permutation matrix, such that
\begin{eqnarray}
\label{Eq_Brules}
B^0 & \equiv & I = B^{2kn+1} \nonumber \\
B^m & = & B^r \quad \text{ with } r = m \, \text{mod}(2kn+1) \nonumber \\
B^n B^m & = & B^{(nm) \, \text{mod}(2kn+1)} ,
\end{eqnarray}
where $I$ denotes the $(2kn+1)$-dimensional identity matrix. The set of powers of the cyclic permutation matrix, $\{ B^{m} \}, m \in [0,2kn+1]$, then acts as a basis for the circulant matrices $A^{(m)}$. 

Let us first consider the case $m=0$. It can easily be shown that 
$$
A^{(0)} = I + \sum\limits_{l=1}^{2k} B^{l} . 
$$
Applying the multinomial theorem and ordering with respect to powers of $B$, we have for the $n$th power of $A^{(0)}$
\begin{eqnarray}
\label{Eq_A0n}
\big( A^{(0)} \big)^n
& = & \sum\limits_{\mathclap{n_0+n_1+\cdots+n_{2k}=n}} \hspace*{8mm} \binom{n}{n_0,n_1,\cdots,n_{2k}} I^{n_0} B^{n_1+2n_2+\cdots+2kn_{2k}} \nonumber \\
& = & b_0^{(n)} I + \sum\limits_{l=1}^{2kn} b_l^{(n)} B^l \nonumber \\
& \equiv & \text{circ} 
\Big\{
\big( b_0^{(n)}, b_1^{(n)}, \ldots , b_{2kn}^{(n)} \big)
\Big\} ,
\end{eqnarray}
where $b_l^{(n)} = p_l^{(n)}$.   
 
For $m>0$, we have
$$
A^{(m)} 
 = B^m + \sum\limits_{l=1}^{2k} B^{m+l} \\
 \equiv B^m \Big( I + \sum\limits_{l=1}^{2k} B^{l} \Big), 
$$
and obtain, with (\ref{Eq_A0n}), for the $n$th power
\begin{equation}
\label{Eq_Amn1}
\big( A^{(m)} \big)^n = B^{mn} \Big( b_0^{(n)} I + \sum\limits_{l=1}^{2kn} b_l^{(n)} B^l \Big) .
\end{equation}
Using (\ref{Eq_Brules}), the factor $B^{mn}$ shifts and wraps all rows of the matrix to the right by $mn$ columns, so that
\begin{eqnarray*}
\label{Eq_Amn}
\big( A^{(m)} \big)^n 
& = & b_0^{(n,m)} I + \sum\limits_{l=1}^{2kn} b_l^{(n,m)} B^l \\
& \equiv & \text{circ} 
\Big\{
\big( b_0^{(n,m)}, b_1^{(n,m)}, \ldots , b_{2kn}^{(n,m)} \big)
\Big\} ,
\end{eqnarray*}
where $b_l^{(n,m)} = b_{(l-nm) \text{mod}(2kn)}^{(n)}$ with $b_l^{(n)} = p_l^{(n)}$ given by (\ref{Eq_pl}). 

For $m=0$, the desired central multinomial coefficients can be found in column $(kn)$, i.e. $M^{(2k,n)} = b_{kn}^{(n)}$. Observing that 
$$
(l-n(2kn-k))\text{mod}(2kn) \equiv (l+kn)\text{mod}(2kn) \, ,
$$
a shift by $m=2kn-k$ yields
\begin{equation}
M^{(2k,n)} = b_{(l+kn)\text{mod}(2kn)}^{(n)} = b_{l}^{(n,2kn-k)} .
\end{equation}
That is, setting $l=0$, the central $(2k+1)$-nomial coefficients $M^{(2k,n)}$ reside on the diagonal of $\big(A^{(2kn-k)}\big)^n$. Taking the trace of $\big(A^{(2kn-k)}\big)^n$ thus proves (\ref{Eq_Lemma1}).
\end{proof}

With Lemma~\ref{L_CMCtr}, the sequences of central $(2k+1)$-nomial coefficients $M^{(2k,n)}$ are given in terms of the trace of powers of the $(2kn+1) \times (2kn+1)$-dimensional circulant boolean matrix
\begin{eqnarray}
\label{Eq_ACMC}
A^{(2kn-k)}
& = & \mathrm{circ} 
\big\{
(1,\overbrace{1,\ldots,1}^{k},0,\ldots,0,\overbrace{1,\ldots,1}^{k})
\big\} \nonumber \\
& = & \mathrm{circ} \left\{ 
\Big( \sum\limits_{l=0}^{k} \delta_{j,1+l} + \sum\limits_{l=0}^{k-1} \delta_{j,1+2kn-l} \Big)_j
\right\} .
\end{eqnarray}
This translates the original problem not just into one of matrix algebra, but, in effect, significantly reduces its combinatorical complexity to finding powers of circulant matrices.


\section{A sum formula for central $(2k+1)$-nomial coefficients}

Not only can circulant matrices be represented in terms of a simple base decomposition using powers of cyclic permutation matrices (see above), but circulant matrices also allow for an explicit diagonalization \cite{Davis70}. The latter will be utilized to prove the main result of this contribution, namely 


\begin{proposition}
\label{P_Msin}
The sequence of central $(2k+1)$-nomial coefficients $M^{(2k,n)}$ with $k,n \in \mathbb{N}, k,n > 0$ is given by
\begin{equation}
\label{Eq_Msin}
M^{(2k,n)} = \frac{1}{2kn+1} 
\left\{
(2k+1)^n + \sum\limits_{l=1}^{2kn} \left( \frac{\sin\big[\frac{(2k+1)l}{2kn+1} \pi\big]}{\sin\big[\frac{l}{2kn+1} \pi\big]} 
\right)^n \right\}.
\end{equation}
\end{proposition}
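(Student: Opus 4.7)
The plan is to leverage Lemma~\ref{L_CMCtr} and compute the trace on the right-hand side via the classical diagonalization of circulant matrices. Since $A^{(2kn-k)}$ is circulant, so is every power of it, and the trace is obtained simply as the sum of the $n$th powers of its eigenvalues. All the work reduces to writing those eigenvalues in closed form.

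First, I would recall the standard spectral decomposition: any $N \times N$ circulant $C = \mathrm{circ}(c_0,c_1,\ldots,c_{N-1})$ has eigenvalues $\lambda_j = \sum_{l=0}^{N-1} c_l \, \omega_j^l$, where $\omega_j = e^{2\pi i j/N}$ for $j=0,1,\ldots,N-1$. Setting $N=2kn+1$ and reading off the nonzero entries of $A^{(2kn-k)}$ from (\ref{Eq_ACMC}), the nonzero coefficients sit at positions $l=0,1,\ldots,k$ and at positions $l=2kn,2kn-1,\ldots,2kn-k+1$. Using $\omega_j^N=1$, these latter exponents are equivalent to $l=-1,-2,\ldots,-k$, so the eigenvalues take the symmetric form
\begin{equation*}
\lambda_j \;=\; \sum_{l=-k}^{k} \omega_j^{\,l} \;=\; \sum_{l=-k}^{k} e^{2\pi i j l/(2kn+1)}.
\end{equation*}
This is the key structural observation that the shift $m=2kn-k$ was engineered to produce.

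Next, I would evaluate this geometric sum in closed form. Multiplying by $e^{-i\pi j/(2kn+1)}/e^{-i\pi j/(2kn+1)}$ and using the identity $\sin\phi = (e^{i\phi}-e^{-i\phi})/(2i)$ (equivalently, invoking the Dirichlet kernel), one obtains, for $j \neq 0$,
\begin{equation*}
\lambda_j \;=\; \frac{\sin\!\big[\tfrac{(2k+1)j}{2kn+1}\pi\big]}{\sin\!\big[\tfrac{j}{2kn+1}\pi\big]},
\end{equation*}
while $\lambda_0 = 2k+1$ directly. Since the eigenvalues of $\bigl(A^{(2kn-k)}\bigr)^n$ are $\lambda_j^n$, the trace is $\sum_{j=0}^{2kn} \lambda_j^n$. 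Substituting this into the trace formula of Lemma~\ref{L_CMCtr} and isolating the $j=0$ term yields exactly (\ref{Eq_Msin}).

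There is no real obstacle in this argument; the entire content lies in recognizing that the shift $m=2kn-k$ places the band of ones symmetrically about the $0$th cyclic position, turning the eigenvalue sum into a symmetric sum that collapses via the Dirichlet kernel. The only points requiring care are bookkeeping: verifying that the index reduction $2kn-l \equiv -l \pmod{2kn+1}$ is applied correctly for the ``wrapped'' half of the band, and checking that the resulting eigenvalues are real (they are, by the manifest symmetry $\lambda_{N-j}=\overline{\lambda_j}=\lambda_j$), so that the right-hand side of (\ref{Eq_Msin}) is automatically real and, by Lemma~\ref{L_CMCtr}, an integer.
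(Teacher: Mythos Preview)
Your proposal is correct and follows essentially the same route as the paper: apply the circulant diagonalization theorem to $A^{(2kn-k)}$, compute its eigenvalues explicitly, and substitute their $n$th powers into the trace formula of Lemma~\ref{L_CMCtr}. Your computation is slightly more streamlined in that you immediately recognize the symmetric index set $\{-k,\ldots,k\}$ and invoke the Dirichlet kernel identity directly, whereas the paper first rewrites the eigenvalue as $1+2\sum_{l=1}^{k}\cos[2(r-1)l\pi/(2kn+1)]$ and then applies product-to-sum identities, but the underlying argument is identical.
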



\begin{proof}
As $A^{(2kn-k)}$ is a circulant matrix, we can utilize the circulant diagonalization theorem to calculate its $n$th power. The latter states that all circulants $c_{ij} = \text{circ}(c_j)$ constructed from an arbitrary $N$-dimensional vector $c_j$ are diagonalized by the same unitary matrix $\mathbf{U}$ with components
\begin{equation}
\label{Eq_Umn}
u_{rs} = \frac{1}{\sqrt{N}} \exp\left[ - \frac{2\pi i}{N_N} (r-1)(s-1) \right] ,
\end{equation}
$r,s \in [1,N]$. Moreover, the $N$ eigenvalues are explicitly given by 
\begin{equation}
E_r(\mathbf{C}) = \sum\limits_{j=1}^{N} c_j \exp\left[ - \frac{2\pi i}{N} (r-1)(j-1) \right] ,
\end{equation}
such that 
\begin{equation}
\label{Eq_cij}
c_{ij} = \sum\limits_{r,s=1}^{N} u_{ir} e_{rs} u^{*}_{sj}
\end{equation}
with $e_{rs} = \text{diag}[ E_r(\mathbf{C}) ] \equiv \delta_{rs} E_r(\mathbf{C})$ and $u^{*}_{rs}$ denoting the complex conjugate of $u_{rs}$.

Using (\ref{Eq_ACMC}), the eigenvalues of $A^{(2kn-k)}$ are
\begin{eqnarray*}
\lefteqn{E_r(A^{(2kn-k)})} \\
& = & \sum\limits_{j=1}^{2kn+1} 
\left\{
\sum\limits_{l=0}^{k} \delta_{j,1+l} + \sum\limits_{l=0}^{k-1} \delta_{j,1+2kn-l} \right\} e^{-2 \pi i \, (r-1)(j-1) / (2kn+1)} \\
& = & \sum\limits_{l=1}^{k+1} e^{-2 \pi i \, (l-1) (r-1) / (2kn+1)} + \sum\limits_{l=2kn+2-k}^{2kn+1} e^{-2 \pi i \, (l-1) (r-1) / (2kn+1)} \\
& = & 1 + 2 \sum\limits_{l=1}^{k} \cos\left[ 2 \frac{r-1}{2kn+1} l \pi \right] \\
& = & \left\{
\begin{array}{ll}
1 + 2 \sin\big[ \frac{k(r-1)}{2kn+1} \pi \big] \cos\big[ \frac{(1+k)(r-1)}{2kn+1} \pi \big] \big/ \sin\big[ \frac{r-1}{2kn+1} \pi \big] & \, r>1 \\
2k+1 & \, r=1 .
\end{array}
\right.
\end{eqnarray*}
Using the product-to-sum identity for trigonometric functions, the last equation can be simplified, yielding
\begin{equation}
E_r(A^{(2kn-k)}) =
\left\{
\begin{array}{ll}
\sin\big[ \frac{2k+1}{2kn+1} (r-1) \pi \big] \big/ \sin\big[ \frac{1}{2kn+1} (r-1) \pi \big] & \quad r>1 \\
2k+1 & \quad r=1 .
\end{array}
\right.
\end{equation}
With this, Eqs.~(\ref{Eq_Umn}) and (\ref{Eq_cij}), one obtains for the elements of the $n$th power of $A^{(2kn-k)}$
\begin{eqnarray*}
\lefteqn{\big(A^{(2kn-k)}\big)_{pq}^n} \\
& = & \sum\limits_{r,s=1}^{2kn+1} u_{pr} E_r^n u^{*}_{rq} \\
& = & \frac{1}{2kn+1} 
\left\{
E_0^n + \sum\limits_{r=2}^{2kn+1} E_r^n e^{- 2 \pi i \, (r-1) (p-q) / (2kn+1)} \right\} \\
& = & \frac{1}{2kn+1} 
\left\{
(2k+1)^n + \sum\limits_{r=1}^{2kn} \left( \frac{\sin\big[ \frac{2k+1}{2kn+1} r \pi \big]}{\sin\big[ \frac{1}{2kn+1} r \pi \big]} \right)^n e^{-2 \pi i \, r (p-q) / (2kn+1)}
\right\} .
\end{eqnarray*}
Taking the trace, finally, proves (\ref{Eq_Msin}).
\end{proof}

Equation (\ref{Eq_Msin}) is remarkable in several respects. First, it provides a general, explicit representation of the sequences of central $(2k+1)$-nomial coefficients in terms of a linearly growing, but finite, sum, thus effectively translating a combinatorical problem into an analytical one. Note specifically $k=1, n\in\mathbb{N}$ yields the sequence of central trinomial coefficients (OEIS A002426), $k=2$ the sequence of central pentanomial coefficients (OEIS A005191), and $k=3$ the sequence of central heptanomial coefficients (OEIS A025012). Secondly, utilizing trigonometric identities, this explicit representation may help to formulate general recurrences not just for coefficients of a given sequence, but between different central multinomial sequences. Moreover, by using different shift parameters $m$ (see proof of Lemma \ref{L_CMCtr}), each $(2k+1)$-nomial coefficient could potentially be represented in a similarly explicit analytical form, thus allowing for a fast numerical calculation of arbitrary $(2k+1)$-nomial coefficients. 

Finally, Proposition \ref{P_Msin} establishes a direct link between central $(2k+1)$-nomial coefficients and the $n$th-degree Fourier series approximation of a function via the Dirichlet kernel $D_k[\theta]$. Using the trigonometric representation of Chebyshev polynomials of the second kind,  
$$
U_{2k}[ \cos(\alpha) ] = \frac{\sin[(2k+1)\alpha]}{\sin[\alpha]} ,
$$
equation (\ref{Eq_Msin}) takes the form
\begin{equation}
\label{Eq_Mu2k}
M^{(2k,n)} = \frac{1}{2kn+1} 
\left\{
(2k+1)^n + \sum\limits_{l=1}^{2kn} \left( U_{2k}\Big[ \cos\big( \tfrac{1}{2kn+1} l \pi \big) \Big] 
\right)^n \right\} .
\end{equation}
Observing $U_{2k}[\cos(l \pi / (2kn+1))] \equiv D_k[2 l \pi / (2kn+1)]$ makes explicit the link between central $(2k+1)$-nomial coefficients and the Dirichlet kernels of fractional angles.

Returning to the original problem by De Volson Wood, however, a definite solution is still at large. Here, relation (\ref{Eq_Msin}) allows so far only for a representation of the combinatorical complexity in terms of an interesting finite analytical construct, thus, in principle, expressing a hard combinatorical problem in a trigonometric framework. 


\section*{Acknowledgments}

The authors wish to thank D Zeilberger for valuable comments in the preparation of the manuscript, and OD Little for inspiring comments. This work was supported by CNRS, the European Community (BrainScales Project No. FP7-269921), and \'Ecole des Neurosciences de Paris Ile-de-France.


\end{document}